\documentclass[12pt,a4paper]{article}

\usepackage[a4paper, top=2.5cm, bottom=2.5cm, left=2.5cm, right=2.5cm]{geometry}

\usepackage[utf8]{inputenc}
\usepackage[T1]{fontenc}

\usepackage{amsmath,amssymb,amsthm,mathtools}
\usepackage{bm}

\usepackage{algorithm}
\usepackage{algpseudocode} 

\usepackage{booktabs}
\usepackage{multirow}
\usepackage{siunitx}

\usepackage{enumitem}

\usepackage{graphicx}
\usepackage{subcaption}
\usepackage{float} 
\graphicspath{{./figures/}{./Figure/}} 

\usepackage{authblk}

\usepackage{hyperref}
\hypersetup{
    colorlinks=true,
    linkcolor=blue,
    filecolor=magenta,
    urlcolor=cyan
}

\newcommand{\R}{\mathbb R}

\newcommand{\E}{\mathbf {E}}
\newcommand{\Var}{\mathbf{Var}}

\theoremstyle{definition}
\newtheorem{definition}{Definition}[section]
\newtheorem{remark}[definition]{Remark}

\theoremstyle{plain}
\newtheorem{theorem}{Theorem}[section]

\newtheorem{proposition}[theorem]{Proposition}

\usepackage{xifthen}
\newcommand{\textcite}[2][]{\ifthenelse{\isempty{#1}}{\cite{#2}}{\cite[#1]{#2}}}
\newcommand{\parencite}[2][]{\ifthenelse{\isempty{#1}}{\cite{#2}}{\cite[#1]{#2}}}

\usepackage{xcolor}
\usepackage{soul}
\setstcolor{red}

\title{A New Estimator of Kullback--Leibler Divergence via Shannon Entropy}

\author[1]{\footnotesize Mehmet S\i dd\i k \c{C}ad\i rc\i}
\author[2*]{\footnotesize Martin Singull}
\affil[1]{\footnotesize Faculty of Science, Department of Statistics, Cumhuriyet University, Sivas, T\"urkiye}
\affil[2]{\footnotesize Department of Mathematics, Link\"oping University, Link\"oping, Sweden}
\affil[*]{\footnotesize Corresponding author: \href{mailto:martin.singull@liu.se}{\texttt{martin.singull@liu.se}}}
\date{}

\begin{document}
\maketitle

\begin{abstract}
We examine the estimation of the Kullback-Leibler (KL) divergence and the use of the goodness-of-fit test for multivariate continuous distributions. Our starting point is the maximum entropy principle for Shannon entropy: among all distributions with a fixed mean vector and covariance matrix, the multivariate Gaussian distributions uniquely maximize entropy. As a result, the KL divergence from a moment-matched Gaussian distribution to an unknown density can then be written as the \emph{entropy difference}, which is a suitable information-theoretic measure of divergence from the Gaussian distribution.

To estimate, we use $k$-nearest neighbor (kNN) estimators based on Shannon entropy and KL divergence derived from the Kozachenko-Leonenko approach and subsequent improvements, along with the consistency and $L^{2}$-convergence results established for these estimators. Motivated by previous entropy-based goodness-of-fit ideas developed for Rényi-type functionals under generalized Gaussian and Student-type models, we describe a KL-based test statistic as being the difference between (i) the entropy of a Gaussian model fitted to the sample mean and covariance and (ii) the KL divergence between the unknown entropy and the kNN estimate. The statistic converges to zero under multivariate normality and converges to a strictly positive bound under non-Gaussian alternatives.

Results from Monte Carlo simulations on various dimensions and sample sizes indicate that the proposed procedure achieves accurate Type I error control and accurate, generally superior power compared to conventional multivariate tests of normality, particularly at medium and high dimensions.

\end{abstract}

\section{Introduction}
\label{sec:intro}

The Kullback-Leibler (KL) divergence, known also as relative entropy, is a fundamental measure in information theory and statistics used to quantify the difference between two probability distributions. It was introduced by Kullback and
Leibler \cite{Kullback1951} and measures the expected log-likelihood loss (or excess log-loss) that arises when a reference model $g$ is employed instead of the true distribution $f$. Because of this decision-theoretic interpretation, KL divergence occurs naturally in probability-based inference, model selection, goodness-of-fit testing, density comparison, and a wide variety of learning and signal processing tasks.

The KL divergence from $f$ to $g$ is defined as follows for two continuous probability density functions (pdf) $f,g:\mathbb{R}^m \to
\mathbb{R}$:
\begin{equation}\label{eq:kl_divergence}
D_{KL}(f \| g) = \int_{\mathbb{R}^m} f(x) \log \frac{f(x)}{g(x)}\,dx.
\end{equation}
 Since the divergence $D_{KL}(f \| g)\geq 0$ and equals zero only when $f = g$ almost everywhere, it measures the difference between distributions in a “directional” manner despite not being symmetric and thus not being a metric.

The KL divergence is closely related to Shannon entropy, measuring the uncertainty in a single distribution. The differential Shannon entropy $H(f)$ \cite{shannon1948mathematical} for density $f$ is given by
\begin{equation}\label{eq:shannon_entropy}
H(f) = -\int_{\mathbb{R}^{m}} f(x) \log f(x)\,dx,
\end{equation}
and a simple processing yields the following result
\begin{equation}\label{eq:kl_shannon}
D_{KL}(f\|g) = -H(f) - \int_{\mathbb{R}^m} f(x)\log g(x)\,dx.
\end{equation}
Equivalently, $D_{KL}(f\|g)$ is defined as the cross-entropy $-\E_f[\log g(X)]$
minus the entropy $H(f)$, and thus can be considered as the \emph{excess} cross-entropy (or excess expected log loss) that is associated with using the oracle model $g$ in place of $f$.

Equation \eqref{eq:kl_shannon} also clarifies why certain comparison distributions are particularly natural in KL-based procedures.
Under constraints of fixed mean and covariance, the multivariate Gaussian distribution maximizes Shannon entropy; see, e.g., \cite{cover2006elements,Jaynes1957,kullback1997information,jaynes2003probability}. Therefore, among all distributions that share the same first and second moments, the Gaussian is the only distribution that maximizes entropy. Similarly,
$D_{KL}(f\|\phi_{{\mu},{{\Sigma}}})$ is the difference in entropy
$H(\phi_{{\mu},{{\Sigma}}})-H(f)$, and gives a principled measure of deviation from the Gaussian distribution where $\phi_{{\mu},{{\Sigma}}}$ is the adapted Gaussian distribution matching the moments of $f$. The observation motivates Gaussian benchmarks in goodness-of-fit tests and forms the basis for the KL-based procedures developed in this work.

To estimate $D_{KL}(f\|g)$ from data is essential in practical applications including hypothesis testing, density comparison, and anomaly detection. While classical approaches rely on parametric modeling or add-on estimators that are based on histograms and kernel density estimators, these can easily become unstable in high dimensions. The nearest neighbor (NN) methods provide an attractive alternative: they employ the local geometric structure of the sample cloud rather than explicit density reconstruction and are naturally extendable to multivariate settings. For entropy estimation, nearest neighbor ideas date back to Kozachenko and Leonenko \cite{kozachenko1987} and have been extensively studied since \cite{goria2005,leonenko2008,penrose2011,penrose2013,berrett2019a,paninski2003estimation}. As for deviation estimation, kNN-based estimators and their consistency properties have been developed in \cite{perez2008kullback,Wang2009,gao2018,bulinski2019}, among others.

This paper, building upon this line of work, focuses on the nearest neighbor estimators of KL divergence in a multivariate continuous setting and highlights the goodness-of-fit test. The main contributions can be summarized as follows:
\begin{enumerate}[label=\roman*.]
\item We provide an information-theoretic justification for Gauss benchmarks by rephrasing the maximum entropy principle with respect to KL divergence under constraints of mean and covariance.
\item Then, under standard regularity and moment conditions, we review and update the asymptotic properties (consistency, asymptotic unbiasedness, and mean square convergence) of Shannon entropy and KL divergence for nearest neighbor estimators.
\item From these estimators, we generate a KL-based test statistic and examine its finite sample behavior through extensive Monte Carlo experiments, examining the empirical size, power, and the effects of dimension, sample size, and
the nearest neighbor parameter $k$.
\end{enumerate}

The rest of the article has been organized as follows. Section~\ref{sec:maxent} restates the principles of maximum entropy and describes their application to KL divergence and Gaussian benchmarks. Section~\ref{sec:kl_estimation} introduces the nearest neighbor estimators of Shannon entropy and the obtained KL divergence
estimators. Section~\ref{sec:hypothesis_test_KL} provides a framework for hypothesis testing. Section~\ref{sec:numerical_experiments_KL} describes the numerical experiments, and Section~\ref{sec:conclusion} briefly discusses possible extensions before concluding.

\section{Maximum Entropy Principles}
\label{sec:maxent}

The principle of maximum entropy, proposed by Jaynes \cite{Jaynes1957}, indicates that among all probability distributions consistent with a particular set of constraints (usually moment conditions), that distribution with the greatest entropy
is the least informative choice. Because KL divergence and Shannon entropy are closely related, expressions for maximum entropy can often be rewritten in terms of minimum KL divergence relative to a suitable comparison distribution.

First of all, it is useful to formalize the class of acceptable solutions within the standard mean-covariance
setting.

\begin{definition}[Class $\mathcal{K}$]\label{def:classK_KL}
The distribution class $H(f)$ of density functions $f$ on $\mathbb{R}^m$
with finite and moment constraints
\begin{align*}
\int_{\mathbb{R}^m} {x} f({x})\,d{x} &= {\mu}, \\
\int_{\mathbb{R}^m} ({x} - {\mu})({x} - {\mu})^{\top} f({x})\,d{x}
&= {\Sigma},
\end{align*}
where ${ \mu} \in \mathbb{R}^m$ denotes a mean vector and ${{\Sigma}}$ denotes a symmetric positive definite covariance matrix of size $m \times m$.
\end{definition}

In the class $\mathcal{K}$, it is well known that the multivariate Gaussian distribution with mean ${\mu}$ and covariance ${\Sigma}$ maximizes Shannon entropy; see, for example, \cite{cover2006elements,kullback1997information,jaynes2003probability}. Specifically, for any $f \in \mathcal{K}$,
\begin{equation}\label{eq:gaussian_maxent}
H(f) \leq \frac{1}{2}\log\left[(2\pi e)^m \det({\Sigma})\right],
\end{equation} and the equality applies if and only if $f$ is a Gaussian density.
\begin{align}
\phi_{{\mu},{\Sigma}}({x})
= \frac{1}{(2\pi)^{m/2}\det({\Sigma})^{1/2}}
\exp\!\left(-\frac{1}{2}({x}-{\mu})^{\top}{\Sigma}^{-1}({x}-{\mu})\right). \label{eq_gaussian_density}
\end{align}
The following proposition rephrases this classical result of maximum entropy in KL divergence terms and will serve simply as a baseline for our testing procedures.

\begin{proposition}[KL divergence Gauss benchmark]\label{prop:KL_maxent}
Assume that $\mathcal{K}$ is defined as in~{Definition} \ref{def:classK_KL} and
$\phi_{{\mu},{\Sigma}}$ represents a Gaussian density {\eqref{eq_gaussian_density}}, with mean ${\mu}$ and
covariance ${\Sigma}$. Then , for every $f \in \mathcal{K}$,
\begin{equation}\label{eq:KL_gaussian_representation}
D_{KL}\bigl(f \,\|\, \phi_{{\mu},{\Sigma}}\bigr)
= H(\phi_{{\mu},{\Sigma}}) - H(f)
\;\geq\; 0,
\end{equation}
with equality if and only if $f = \phi_{{\mu},{\Sigma}}$ holds almost everywhere (a.e.).
\end{proposition}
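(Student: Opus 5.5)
The plan is to use the decomposition \eqref{eq:kl_shannon}, $D_{KL}(f\|g) = -H(f) - \int f\log g$, with $g=\phi_{\mu,\Sigma}$, and to show that the cross-entropy term depends on $f$ only through its first two moments. Once that is established, it equals the same quantity computed for $f=\phi_{\mu,\Sigma}$, namely $H(\phi_{\mu,\Sigma})$.

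Concretely, from \eqref{eq_gaussian_density},
\begin{equation*}
-\log\phi_{\mu,\Sigma}(x) = \tfrac{m}{2}\log(2\pi) + \tfrac12\log\det\Sigma + \tfrac12 (x-\mu)^{\top}\Sigma^{-1}(x-\mu).
\end{equation*}
Integrating against $f\in\mathcal{K}$ requires only the quadratic-form term. For it we use the trace identity
\begin{equation*}
\int (x-\mu)^{\top}\Sigma^{-1}(x-\mu) f(x)\,dx = \operatorname{tr}\Bigl(\Sigma^{-1}\int (x-\mu)(x-\mu)^{\top}f(x)\,dx\Bigr) = \operatorname{tr}(I_m)=m.
\end{equation*}
This is justified because the integrand is nonnegative and the second moments are finite by the definition of $\mathcal{K}$. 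Hence
\begin{equation*}
-\int f\log\phi_{\mu,\Sigma} = \tfrac12\log\bigl[(2\pi e)^m\det\Sigma\bigr].
\end{equation*}
Applying the same computation to $f=\phi_{\mu,\Sigma}$, which lies in $\mathcal{K}$, shows this value is exactly $H(\phi_{\mu,\Sigma})$, consistent with the right-hand side of \eqref{eq:gaussian_maxent}. Substituting into \eqref{eq:kl_shannon} gives the identity \eqref{eq:KL_gaussian_representation}.

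For nonnegativity and the equality case, I will invoke Gibbs' inequality rather than \eqref{eq:gaussian_maxent}, since \eqref{eq:gaussian_maxent} is itself the consequence we are rephrasing. Apply Jensen's inequality to the strictly concave function $\log$ with respect to the probability measure $f\,dx$ on $\{f>0\}$:
\begin{equation*}
-D_{KL}(f\|\phi_{\mu,\Sigma}) = \int_{\{f>0\}} f\log\frac{\phi_{\mu,\Sigma}}{f} \le \log\int_{\{f>0\}}\phi_{\mu,\Sigma} \le \log 1 = 0.
\end{equation*}
Equality in the first step forces $\phi_{\mu,\Sigma}/f$ to be constant $f$-a.e. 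Equality in the second step forces $\{f>0\}$ to carry full Gaussian mass. Since both functions are densities, the constant must be $1$, so $f=\phi_{\mu,\Sigma}$ a.e. The converse is immediate.

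The main obstacle is integrability bookkeeping. The splitting in \eqref{eq:kl_shannon} is only meaningful if $H(f)$ is well defined. The cross-entropy term is always finite here, by the computation above. However, $H(f)$ could a priori be $-\infty$, and the positive part of $-f\log f$ must be controlled. I will handle this as follows: the Jensen argument shows $D_{KL}(f\|\phi_{\mu,\Sigma})\in[0,\infty]$ is well defined. Since $\int f|\log\phi_{\mu,\Sigma}|<\infty$, the integral $\int f\log f$ exists in $(-\infty,\infty]$, so $H(f)\in[-\infty,\infty)$. The identity then holds in the extended sense, with both sides equal to $+\infty$ when $H(f)=-\infty$. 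In the finite-entropy case it is a genuine equality of real numbers.
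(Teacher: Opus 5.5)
Your proof is correct. Its first half matches the paper's: both expand $-\log\phi_{\mu,\Sigma}$ and use the moment constraints of $\mathcal{K}$ to get $-\int f\log\phi_{\mu,\Sigma}=\frac12\log[(2\pi e)^m\det\Sigma]=H(\phi_{\mu,\Sigma})$. Your trace identity is simply the ``direct computation'' the paper leaves implicit.

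The routes differ for the inequality and its equality case. The paper imports the classical maximum-entropy bound \eqref{eq:gaussian_maxent} together with its ``equality iff Gaussian'' clause. So $D_{KL}(f\,\|\,\phi_{\mu,\Sigma})=0$ gives $H(f)=H(\phi_{\mu,\Sigma})$, and hence $f=\phi_{\mu,\Sigma}$ a.e.; this tacitly uses that the only Gaussian in $\mathcal{K}$ is $\phi_{\mu,\Sigma}$. You instead prove Gibbs' inequality directly via Jensen on $\{f>0\}$. You then read off the equality case from strict concavity of $\log$ plus the full-mass condition.

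The paper's version is shorter and makes the proposition literally a restatement of a cited theorem. Yours is self-contained and avoids the circularity you point out. The textbook proof of \eqref{eq:gaussian_maxent} is precisely your identity combined with Gibbs, so your argument delivers \eqref{eq:gaussian_maxent} and its equality clause as corollaries rather than assuming them.

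Your extended-value bookkeeping is a further gain. The paper treats $H(f)$ as a finite real number without comment, although Definition~\ref{def:classK_KL} does not clearly exclude $H(f)=-\infty$. You show that $\int f\log f$ exists in $(-\infty,\infty]$, which makes the identity meaningful in that case too. Its well-posedness rests on $\log t\le t-1$, which bounds the negative part of $f\log(f/\phi_{\mu,\Sigma})$ by the integrable function $\phi_{\mu,\Sigma}$; it would be worth stating this explicitly.
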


\begin{proof}
By using the expression of KL divergence and the expression of Gaussian density,
\[
D_{KL}\bigl(f \,\|\, \phi_{{\mu},{\Sigma}}\bigr)
= \int_{\mathbb{R}^m} f({x}) \log \frac{f({x})}{\phi_{{\mu},{\Sigma}}({x})}\,d{x}
= -H(f) - \int_{\mathbb{R}^m} f({x}) \log \phi_{{\mu},{\Sigma}}({x})\,d{x}.
\]
A direct computation allows for any $f \in \mathcal{K}$,
\[
\int_{\mathbb{R}^m} f({x}) \log \phi_{{\mu},{\Sigma}}({x})\,d{x}
= -\frac{1}{2}\log\bigl((2\pi)^m \det({\Sigma})\bigr) - \frac{m}{2},
\]
since the corresponding first and second moments of $f$ match those of $\phi_{{\mu},{\Sigma}}$.
Therefore
\[
D_{KL}\bigl(f \,\|\, \phi_{{\mu},{\Sigma}}\bigr)
= -H(f) + \frac{1}{2}\log\bigl((2\pi)^m \det({\Sigma})\bigr) + \frac{m}{2}
= H(\phi_{{\mu},{\Sigma}}) - H(f),
\]
\eqref{eq:gaussian_maxent} is a non-negative value. The equality holds only and exclusively under the following condition:
$H(f)=H(\phi_{{\mu},{\Sigma}})$, and therefore $f$ must coincide with
$\phi_{{\mu},{\Sigma}}$ { almost everywhere}.
\end{proof}

Proposition~\ref{prop:KL_maxent} suggests that the principle of maximum entropy can be interpreted as the minimum KL divergence principle in the constrained class $\mathcal{K}$: among all densities {$f \in \mathcal{K}$  with mean ${\mu}$ and covariance
${{\Sigma}}$, there is a unique maximum value of entropy for the Gaussian $\phi_{{\mu},{{\Sigma}}}$ and, equivalently, the KL divergence $D_{KL}(f\|\phi_{{\mu},{{\Sigma}}})$ measures the
\emph{entropy gap} of $f$ relative to this maximum. Specifically, then for any $f \in \mathcal{K}$,
\[
D_{KL}\bigl(f \,\|\, \phi_{{\mu},{\Sigma}}\bigr) = 0
\quad\Longleftrightarrow\quad
f = \phi_{{\mu},{\Sigma}} \ \text{a.e.},
\]
That justifies using Gaussian models as a benchmark for testing goodness-of-fit and model validation.

\begin{remark}
For any density pair $(f,g)$, the Gibbs' inequality states that $D_{KL} \left(f\|g\right) \ge 0$,
while equality applies only if and only if $f = g$ almost everywhere. Proposition~\ref{prop:KL_maxent},
expresses the KL divergence $D_{KL}(f\|\phi_{{\mu},{\Sigma}})$ in terms of a reference distribution $\phi_{{\mu},{\Sigma}}$ that is selected in such a way that the divergence coincides exactly with the entropy difference $H(\phi_{{\mu},{\Sigma}})-H(f)$. 

\end{remark}

A similar table emerges for the multivariate generalized Gaussian (or exponential power) family. For the isotropic case, suppose $f$ is the density of ${{X}}\in\mathbb{R}^m$ supported on $\mathbb{R}^m$ and assume that for some $s>0$ the moment $\E\|{{X}}\|^{s}$ is finite. Results such as \cite{kapur1989,solaro2004,berrett2019a} yield entropy inequalities of the following form
\[
H(f) \leq m \log\!\bigl(\alpha(m,s)\,\E\|{{X}}\|^{s}\bigr),
\]
holds for an explicit constant $\alpha(m,s)$ associated with $m$ and $s$, if and only if ${{X}}$ satisfies the isotropic generalized Gaussian distribution $GG_{\tau}(m,s)$ for the corresponding scale parameter $\tau$. The same statement holds for multivariate exponential power distributions with general location and variance matrices; see, e.g., \cite{solaro2004,berrett2019a}. For these classes, the corresponding constraint is formulated in terms of radial moments instead of second-order moments. As in Proposition~\ref{prop:KL_maxent}, these entropy bounds can be reformulated as KL inequalities associated with the corresponding generalized Gaussian benchmark $g_{GG}$. For the case where $f$ and $g_{GG}$ share the same constraint (e.g., the same $\E\|{{X}}\|^{s}$ in an isotropic medium), we have the following result
\[
D_{KL}\bigl(f \,\|\, g_{GG}\bigr)
= H(g_{GG}) - H(f) \;\geq\; 0,
\]
and the equality is valid if and only if $f = g_{GG}$ almost everywhere. From this point of view, KL-based tests can also be constructed with non-Gaussian maximum entropy benchmarks; in this paper, however, we restrict our attention to the classical Gaussian mean-covariance case.

\section{Nearest-Neighbor Estimators of the Shannon Entropy and KL divergrence}
\label{sec:kl_estimation}

To estimate Shannon entropy and the KL divergence is a crucial step in many nonparametric inference problems. For multivariate settings, the nearest neighbor methods provide a practical alternative to histogram and kernel-based estimators, in particular when dimension $m$ is moderate and explicit multivariate density estimation has become unstable.

We recall the $k$-nearest neighbor (kNN) estimator of Shannon entropy, describe how it leads to the KL divergence estimator, and outline the asymptotic properties that form the basis of our testing procedure.

\subsection{Nearest-neighbor estimation of Shannon entropy}

Suppose {${X}_1,\dots,{X}_N$} are i.i.d. random vectors in $\R^m$ with density $f$, and $Y_1,\dots,Y_M$ are i.i.d. random vectors in $\R^m$ with density $g$, which are independent of the $X$ sample. Let us assume that $f$ and $g$ are (Lebesgue) densities which are almost everywhere continuous and almost everywhere positive on their supports, and satisfy the regularity and tail/moment conditions specified in Section~\ref{sec:kl_estimation} when specifying asymptotic results.

The nearest neighbor estimator of Shannon entropy was introduced by Kozachenko and Leonenko \cite{kozachenko1987}, reviewed in more detail in \cite{goria2005,leonenko2008}, and it is based on a geometric approach to local density: The sample $X_i$ becomes sparse around $X_i$ when its $k$th nearest neighbor is distant, and the principal density decreases; when neighbors are close, the principal density increases.

We assume that $\rho_{i,k,N}$ denotes the Euclidean distance from $X_i$ to its $k$th nearest neighbor among $\{X_j\}_{j\neq i}$. The standard version of the Kozachenko–Leonenko (kNN) estimator of Shannon entropy is the following
\begin{equation}\label{eq:kNN_entropy_estimator}
\widehat{H}_{N,k}(f)
= \psi(N) - \psi(k) + \log V_m + \frac{m}{N}\sum_{i=1}^{N}\log \rho_{i,k,N},
\end{equation}
where $V_m = \pi^{m/2}/\Gamma(\frac{m}{2}+1)$ is the volume of the unit ball in $\mathbb{R}^m$ and $\psi(\cdot)$ is the digamma function.

The estimator can be seen as an extension of the statement below:
\[
H(f) = -\E[\log f(X_1)],
\]
where we replace $f(X_i)$ with
the local kNN density approximation guided by $\rho_{i,k,N}$.

Adjustment parameter $k$ balances bias and variance: a small $k$ results in lower bias but higher variance, whereas a larger $k$ increases bias to stabilize the estimate. While in practice, $k$ is typically fixed to an integer (e.g., $k\in\{3,5,10\}$, consistent with $k$ constant asymptotic theory and sufficient for the dimensions considered in our experiments.

\begin{remark}
Some equivalent presentations include the constants in a single logarithm, for example
\[
\widehat{H}_{N,k}(f)
= \frac{1}{N}\sum_{i=1}^{N}\log\!\left(\rho_{i,k,N}^m V_m e^{\psi(N)-\psi(k)}\right),
\]
and uses $\psi(N)\approx \log(N-1)$ for large $N$. As it is standard in the theoretical literature and makes the finite sample constants explicit, we preserve the digamma form in
\eqref{eq:kNN_entropy_estimator}.
\end{remark}

\subsection{Nearest-neighbor estimation of KL divergence}

To estimate $D_{KL}(f\|g)$, a direct additive approach estimates $f$ and $g$ separately and then combines these estimates using \eqref{eq:kl_divergence}. In moderate or high dimensions, this strategy often inherits the instability of multivariate density estimation.
Let $X_1,\dots,X_N\sim f$ and $Y_1,\dots,Y_M\sim g$ be independent samples, and let $\nu_{i,k}$ denotes the  Euclidean distance from $X_i$ to its $k$th nearest neighbor in the $Y$ sample. According to \cite{perez2008kullback,leonenko2008,Wang2009}, a commonly used kNN estimator for $D_{KL}(f\|g)$ is
\begin{equation}\label{eq:KL_kNN_estimator}
\widehat{D}_{KL}(f\|g)
= \frac{m}{N}\sum_{i=1}^{N}\log\frac{\nu_{i,k}}{\rho_{i,k,N}}
+ \psi(M) - \psi(N-1).
\end{equation}
Statistics compare the typical neighbor sizes around each $X_i$ when neighbors are obtained from the same distribution (via $\rho_{i,k,N}$) and from the second distribution (via $\nu_{i,k}$). When $f$ and $g$ are closely related, these radii are comparable and the estimated deviation is small; systematic differences produce a positive estimate.

Intuitively, \eqref{eq:KL_kNN_estimator} follows from the following identity
\[
D_{KL}(f\|g) = \E[\log f(X_1)] - \E[\log g(X_1)],
\]
 where each log-density is substituted with a kNN-based approximation constructed from the associated sample. In the difference, the volume term $V_m$ cancels out, which explains its absence in \eqref{eq:KL_kNN_estimator}. For large $M,N$ values, $\psi(M)-\psi(N-1)\approx \log\!\bigl(M/(N-1)\bigr)$ can be employed, but we retain the digamma form to follow the standard bias correction.

\subsection{Asymptotic properties}
There is a well-developed asymptotic theory for kNN entropy and divergence estimators;
see, among others,
\cite{kozachenko1987,goria2005,leonenko2008,penrose2011,penrose2013,Wang2009,berrett2019a,cadirci2022entropy}.
For our objectives, it suffices to provide a representative consistency conclusion for
a fixed $k$ under standard smoothness and tail conditions.

\begin{theorem}[Consistency of kNN estimators]\label{thm:consistency_nn}
Suppose that $f$ and $g$ are densities on $\mathbb{R}^m$ and that $H(f)$ and $H(g)$ are finite. Assume that $f$ and $g$ are almost everywhere continuous, upper bounded, and satisfy the following moment/tail condition
We assume that they satisfy a moment/tail condition of the form $\E\|X\|^{m+\delta}<\infty$ and $\E\|Y\|^{m+\delta}<\infty$, where $X\sim f$ and $Y\sim g$. Let $k\ge 1$ be a constant. Then, as $N,M\to\infty$,
\[
\widehat{H}_{N,k}(f)\xrightarrow{\text{a.s.}} H(f),
\qquad
\widehat{D}_{KL}(f\|g)\xrightarrow{\text{a.s.}} D_{KL}(f\|g).
\]
Under the same conditions, the estimators will also be consistent in terms of the mean square error, i.e., $\Var(\widehat{H}_{N,k}(f))\to 0$ and $\Var(\widehat{D}_{KL}(f\|g))\to 0$, and will be asymptotically unbiased.
\end{theorem}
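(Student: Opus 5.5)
The plan is to follow the Kozachenko--Leonenko/Leonenko--Pronzato--Savani route: prove asymptotic unbiasedness and vanishing variance (hence $L^2$ consistency), then upgrade to almost sure convergence. We first treat $\widehat{H}_{N,k}(f)$, and then obtain $\widehat{D}_{KL}(f\|g)$ by the same argument applied to the two families of radii $\rho_{i,k,N}$ and $\nu_{i,k}$. Write $\zeta_{N,i}=(N-1)V_m\rho_{i,k,N}^m$. Since $\psi(N)-\log(N-1)\to 0$, the estimator equals $\frac1N\sum_i\log\zeta_{N,i}-\psi(k)+o(1)$, and its mean is $\E\log\zeta_{N,1}-\psi(k)+o(1)$. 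Similarly, with $\xi_{M,i}=MV_m\nu_{i,k}^m$ we have
\[
\widehat{D}_{KL}(f\|g)=\frac1N\sum_{i=1}^N\bigl(\log\xi_{M,i}-\log\zeta_{N,i}\bigr)+o(1),
\]
so it suffices to show that $\E\log\zeta_{N,1}\to\psi(k)-\E\log f(X_1)$ and $\E\log\xi_{M,1}\to\psi(k)-\E\log g(X_1)$, together with variance bounds.

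\textbf{Step 1 (conditional limit law).} Condition on $X_1=x$. Then
\[
P(\zeta_{N,1}>u\mid X_1=x)=P\bigl(\mathrm{Bin}(N-1,p_N(x,u))<k\bigr),
\]
where $p_N(x,u)$ is the $f$-mass of the ball of volume $u/(N-1)$ around $x$. At every Lebesgue point $x$ (a.e.\ $x$, using a.e.\ continuity) we have $(N-1)p_N(x,u)\to uf(x)$, so $\zeta_{N,1}\mid X_1=x$ converges in law to $\Gamma(k,f(x))$. The expectation of $\log$ of this limit is $\psi(k)-\log f(x)$. The same holds for $\xi_{M,1}$, with $g$ replacing $f$ and $M$ replacing $N-1$.

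\textbf{Step 2 (uniform integrability), the main obstacle.} Convergence in law does not give convergence of $\E\log\zeta_{N,1}$. We need $\sup_N\E\bigl[|\log\zeta_{N,1}|^{1+\epsilon}\bigr]<\infty$, and we handle the two tails separately. For the lower tail, the upper boundedness of $f$ gives $p_N(x,u)\le \|f\|_\infty u/(N-1)$. This yields $P(\zeta_{N,1}<u\mid x)\le C u^k$ uniformly in $N$ and $x$, which controls $\E[(\log^-\zeta)^2]$. For the upper tail we use the moment condition $\E\|X\|^{m+\delta}<\infty$, following the bounds of Leonenko et al.\ and Penrose--Yukich. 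The event that $\rho_{1,k,N}$ is large requires fewer than $k$ points within distance $r$ of $x$, and we bound its probability by a binomial tail plus a term involving $\|x\|$. Concretely, we show $\E[\zeta_{N,1}^{\beta}\mid X_1=x]\le C(1+\|x\|^{m\beta})$ for small $\beta>0$, integrate against $f$ using the moment assumption, and apply $\log^+ t\le t^\beta/\beta$. Finiteness of $H(f)$ ensures $\E|\log f(X_1)|<\infty$, so dominated/Vitali convergence applies. I expect this upper-tail bound, especially for $\nu_{i,k}$ when $f$ has mass outside the bulk of $g$, to be the delicate point. For it I would additionally use that $\int f|\log g|<\infty$ (implicit in $D_{KL}(f\|g)$ being finite) together with the moment bound on $Y$. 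These steps give asymptotic unbiasedness of both estimators.

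\textbf{Step 3 (variance and a.s.\ convergence).} We write $\Var=\frac1N\Var(\log\zeta_{N,1})+\frac{N-1}{N}\mathrm{Cov}(\log\zeta_{N,1},\log\zeta_{N,2})$. The first term is $O(1/N)$ by Step 2. For the covariance, we condition on $(X_1,X_2)=(x,y)$ with $x\ne y$. The two $k$NN radii shrink at rate $N^{-1/m}$, so they become asymptotically independent with the limit laws of Step 1, and the covariance tends to $0$, again by uniform integrability. For $\widehat{D}_{KL}$, the terms $\log\xi_{M,i}$ are handled identically, with conditional independence given the $Y$-sample structure. This gives $\Var\to0$, hence $L^2$ consistency.

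For almost sure convergence, a fixed-$k$ variance of order $O(1/N)$ alone is insufficient. I would instead invoke the stabilization / law-of-large-numbers theory for nearest-neighbor functionals of Penrose and Yukich \cite{penrose2011,penrose2013}, as in \cite{berrett2019a}. The summands $\log\zeta_{N,i}$ are exponentially stabilizing functionals that satisfy the moment condition from Step 2, and their LLN gives $\frac1N\sum_i\log\zeta_{N,i}\to\psi(k)-\E\log f(X_1)$ almost surely. This yields $\widehat{H}_{N,k}(f)\to H(f)$ a.s. Applying the same reasoning to the bipartite functional $\log\xi_{M,i}$ and subtracting gives $\widehat{D}_{KL}(f\|g)\to D_{KL}(f\|g)$ a.s.
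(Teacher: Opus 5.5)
The paper gives no argument for this theorem beyond pointing to the $L^2$ framework of \cite{penrose2011,penrose2013} and to \cite{cadirci2022entropy}, so your outline follows the route it cites. The problem is that the step you correctly flag as the main obstacle rests on a false bound. By your Step~1 and Fatou's lemma, $\liminf_{N}\E[\zeta_{N,1}^{\beta}\mid X_1=x]\ge \frac{\Gamma(k+\beta)}{\Gamma(k)}f(x)^{-\beta}$ at every Lebesgue point. For the standard Gaussian the right-hand side is of order $e^{\beta\|x\|^2/2}$, so no estimate $\E[\zeta_{N,1}^{\beta}\mid X_1=x]\le C(1+\|x\|^{m\beta})$ uniform in $N$ exists. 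In general, an $N$-uniform pointwise bound is governed by the lower local average $\inf_{0<r\le R}(V_m r^m)^{-1}\int_{B(x,r)}f$. The $f$-integrability of its negative powers is not among your hypotheses; conditions of this kind are imposed, e.g., in \cite{bulinski2019}.

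For $\widehat H_{N,k}(f)$ you can avoid pointwise domination by working unconditionally. Set $R_i=\max(\|X_i\|,\rho_{i,k,N})$. Stone's lemma says each point of $\R^m$ lies in at most $k\gamma_m$ of the balls $B(X_i,\rho_{i,k,N})$, with $\gamma_m$ depending only on $m$. With exchangeability this gives $\E[\zeta_{N,1}\mathbf{1}\{R_1\le t\}]=\frac1N\E\sum_i\zeta_{N,i}\mathbf{1}\{R_i\le t\}\le k\gamma_m V_m(2t)^m$. Apply this on dyadic shells of $R_1$, together with H\"older and $P(R_1>t)\le P(\|X_1\|>t/2)+\binom{N-1}{k-1}P(\|X_1\|\ge t/2)^{N-k}$. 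This yields $\sup_N\E\zeta_{N,1}^{\beta}<\infty$ for small $\beta$. Combined with the convergence in law of $\zeta_{N,1}$ to the mixture $\int f(x)\,\Gamma(k,f(x))\,dx$, it gives $\E\log\zeta_{N,1}\to\psi(k)+H(f)$ and the $(2+\epsilon)$-moment bounds that Step~3 needs.

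For $\widehat D_{KL}(f\|g)$, neither this device nor your proposed repair works. The balls $B(X_i,\nu_{i,k})$ have radii set by the other sample and may overlap without bound. The same Fatou argument gives $\liminf_M\E\xi_{M,1}^{\beta}\ge\frac{\Gamma(k+\beta)}{\Gamma(k)}\int f g^{-\beta}$, which $\int f|\log g|<\infty$ and moments of $Y$ do not control. Take $m=1$, $f(x)=(1+|x|)^{-3}$, $g(x)=\frac12 e^{-|x|}$. Both are bounded and continuous with finite entropy, $\E|X|^{3/2}$ and $\E|Y|^{3/2}$ are finite, and $D_{KL}(f\|g)<\infty$. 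Yet $\int f g^{-\beta}=\infty$ for every $\beta>0$, and even $\int f\log^2 g=\infty$.

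For fixed $N$, $\log\xi_{M,i}\to\log E_i+|X_i|+\log 2$ in law as $M\to\infty$, with $E_i\sim\Gamma(k,1)$. The weak limit of $\widehat D_{KL}(f\|g)$ therefore has infinite second moment, since $\E X_1^2=\infty$. This forces $\Var(\widehat D_{KL}(f\|g))\to\infty$ as $M\to\infty$ for every fixed $N$. So the $L^2$ claim for the divergence estimator cannot be proved from the stated hypotheses without extra structure. You would need either a condition linking $g$ to $f$ (e.g.\ $\int f g^{-\beta}<\infty$ plus a local lower-average condition on $g$), or a restriction on how fast $M$ grows relative to $N$.

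The a.s.\ step inherits these problems. The stabilization laws of large numbers you invoke need exactly the uniform moment bounds of Step~2. Exponential stabilization of $k$NN functionals is also not uniform for densities that approach zero, such as the Gaussian. Finally, those results concern one-sample functionals indexed by a single $n$. They do not by themselves give a.s.\ convergence of $\frac1N\sum_i\log\xi_{M,i}$ when $N$ and $M$ tend to infinity independently.
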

The above statement is consistent with the $L^2$-consistency framework introduced in
\cite{penrose2011,penrose2013}. In particular, Cadirci et al.
\cite{cadirci2022entropy} employ Poissonization techniques to estimate the mean square convergence of the Kozachenko-Leonenko entropy estimator for a random fixed $k\ge 1$ in the context of entropy-based testing.
A similar set of techniques, together with the unbiased form
\eqref{eq:KL_kNN_estimator}, supports the associated mean square consistency claims for the kNN KL estimator used here.

\begin{remark}
Depending on whether the support is bounded or unbounded, different sources specify slightly different sufficiency conditions. In the case of bounded support, it is generally assumed that there is a positive lower bound on the density within the support; in the case of unbounded
support (including Gauss and related families), conditions on moments and tails substitute for global lower bounds. In our numerical experiments, we have chosen the conditions in Theorem~\ref{thm:consistency_nn} to cover the distributions used, and they remain close to the standard assumptions in the cited references.
\end{remark}
In summary, the nearest neighbor methods offer a simple and theoretically grounded way to estimate both Shannon entropy and KL divergence in multivariate problems.
These methods avoid explicit multivariate density reconstruction,
are computationally efficient, and form the basis for the KL-based goodness-of-fit procedures
discussed in the rest of the paper.

\section{Goodness-of-fit tests based on KL divergence}
\label{sec:hypothesis_test_KL}

Based on the  KL divergence, we perform a goodness-of-fit test for multivariate normality. Let {${X}_1,\dots,{X}_N$} be i.i.d. random vectors in $\R^m$ with density $f$, finite mean ${\mu} = \E[{{X}}]$, and covariance
${\Sigma} = \Var({{X}})$, are i.i.d random vectors in $\R^m$. We test the composite null hypothesis
\[
H_0:\ f \in \mathcal{F}_{\mathcal N}
\quad\text{(family of $m$-variable Gaussians with unconditional mean and covariance)}
\]
against the general alternative $H_1:\ f\notin \mathcal{F}_{\mathcal N}$.

Let
\[
\bar{ X}_N = \frac1N\sum_{i=1}^N X_i,
\qquad
{{S}}_N = \frac{1}{N-1}\sum_{i=1}^N (X_i-\bar{ X}_N)(X_i-\bar{ X}_N)^\top
\]
be the sample mean and sample covariance matrix, respectively, and suppose that $\phi_{\bar{ X}_N,{{S}}_N}$ is the Gaussian density having this mean and covariance.
We recall from Proposition~\ref{prop:KL_maxent} that, within the class
$\mathcal{K}$ of distributions with fixed mean ${\mu}$ and covariance ${{\Sigma}}$,
the multivariate Gaussian density $\phi_{{\mu},{{\Sigma}}}$ uniquely maximizes the
Shannon entropy and satisfies
\[
D_{KL}(f\|\phi_{{\mu},{{\Sigma}}}) = H(\phi_{{\mu},{{\Sigma}}}) - H(f) \ge 0,
\]
with equality if and only if $f=\phi_{{\mu},{{\Sigma}}}$ almost everywhere and
\[
H(\phi_{{\mu},{{\Sigma}}}) = \frac12 \log\bigl[(2\pi e)^m \det({{\Sigma}})\bigr].
\]
Based on this idea, let's define the KL-based test statistic
\begin{equation}
\label{eq:KL_test_stat}
T_{N,k}^{\mathrm{KL}}
  := H\!\bigl(\phi_{\bar{ X}_N,{{S}}_N}\bigr)
     - \widehat H_{N,k}(f)
  = \frac12 \log\bigl[(2\pi e)^m \det({{S}}_N)\bigr]
    - \widehat H_{N,k}(f),
\end{equation}
where $\widehat H_{N,k}(f)$ is an estimator of the Shannon entropy given in~\eqref{eq:kNN_entropy_estimator} using the $k$th nearest neighbor.
As constructed, $T_{N,k}^{\mathrm{KL}}$ gives an estimate of the KL divergence
$D_{KL}(f\|\phi_{{\mu},{{\Sigma}}})$ using the identity $D_{KL}(f\|g)=H(g)-H(f)$.

\begin{theorem}
\label{thm:KL_test_consistency}
Suppose that $f$ is a continuous density on $\R^m$ with finite second
moments and that the conditions of Theorem~\ref{thm:consistency_nn}
are satisfied. For any fixed $k\ge1$,
\[
T_{N,k}^{\mathrm{KL}}
\xrightarrow{p}
H(\phi_{{\mu},{{\Sigma}}}) - H(f)
= D_{KL}(f\|\phi_{{\mu},{{\Sigma}}})
\quad\text{as } N\to\infty.
\]
In particularly,
\[
T_{N,k}^{\mathrm{KL}} \xrightarrow{p} 0
\quad\text{if } X\sim N_m({\mu},{{\Sigma}}),
\]
and $T_{N,k}^{\mathrm{KL}} \xrightarrow{p} \xi(f)>0$ for any non-Gaussian
$f\in \mathcal{K}$ having the same mean ${\mu}$ and covariance ${{\Sigma}}$.
\end{theorem}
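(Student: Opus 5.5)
The statistic in \eqref{eq:KL_test_stat} splits into a parametric part, $\tfrac12\log[(2\pi e)^m\det(S_N)]$, and a nonparametric part, $\widehat H_{N,k}(f)$. The plan is to show that each part converges in probability to its population counterpart and then combine them by Slutsky's lemma together with the continuous mapping theorem. The limit is identified through Proposition~\ref{prop:KL_maxent}.

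\emph{Parametric part.} Since $f$ has finite second moments, the strong law of large numbers applied to $X_i$ and $X_iX_i^\top$ gives $\bar X_N\to\mu$ and $S_N\to\Sigma$ almost surely; the factor $N/(N-1)\to1$ does not affect this. The map $A\mapsto\det(A)$ is continuous, and $\log$ is continuous on $(0,\infty)$. Because $\Sigma$ is positive definite, $\det\Sigma>0$, so $\log\det S_N$ is eventually well defined and converges almost surely to $\log\det\Sigma$. Hence
\[
\tfrac12\log\bigl[(2\pi e)^m\det(S_N)\bigr]\xrightarrow{\text{a.s.}}\tfrac12\log\bigl[(2\pi e)^m\det(\Sigma)\bigr]=H(\phi_{\mu,\Sigma}).
\]

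\emph{Nonparametric part.} Theorem~\ref{thm:consistency_nn}, applied with fixed $k$, gives $\widehat H_{N,k}(f)\xrightarrow{\text{a.s.}}H(f)$. If one prefers to rely only on the $L^2$ statement of that theorem, asymptotic unbiasedness together with vanishing variance gives convergence in mean square, and Chebyshev's inequality then yields convergence in probability.

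\emph{Combining.} Subtracting the two limits gives $T^{\mathrm{KL}}_{N,k}\xrightarrow{p}H(\phi_{\mu,\Sigma})-H(f)$. Here $\mu$ and $\Sigma$ are the true mean and covariance of $f$, so $f\in\mathcal K$ by construction. Proposition~\ref{prop:KL_maxent} then identifies this limit with $D_{KL}(f\|\phi_{\mu,\Sigma})$.

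The two special cases follow from the equality clause of Proposition~\ref{prop:KL_maxent}. If $X\sim N_m(\mu,\Sigma)$, then $f=\phi_{\mu,\Sigma}$ and the limit is $0$. If $f\in\mathcal K$ is non-Gaussian, then $f\ne\phi_{\mu,\Sigma}$ on a set of positive measure, so $\xi(f):=D_{KL}(f\|\phi_{\mu,\Sigma})>0$. This quantity is finite because $H(f)$ is finite by assumption.

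\emph{Main obstacle.} The only delicate point is verifying that the hypotheses of Theorem~\ref{thm:consistency_nn} hold for the entropy estimator alone; the $g$-part of that theorem is irrelevant here. One needs $f$ to be bounded and to have a moment of order $m+\delta$, which is stronger than the finite second moments assumed in the statement. I would simply invoke the assumption that the conditions of Theorem~\ref{thm:consistency_nn} are satisfied. I would also note that, for the nonparametric part, convergence in probability requires only the $L^2$ conclusion of that theorem, so no almost-sure result is needed.
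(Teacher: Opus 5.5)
Your proof is correct and follows essentially the same route as the paper: the paper's decomposition into (I)+(II)+(III) is your parametric/nonparametric split with $H(\phi_{\mu,\Sigma})$ and $H(f)$ added and subtracted, and it likewise uses the law of large numbers with continuity of $\log\det$, Theorem~\ref{thm:consistency_nn}, and Proposition~\ref{prop:KL_maxent}. Your write-up is the more careful of the two, since you correctly attribute the vanishing in probability to the kNN error term $H(f)-\widehat H_{N,k}(f)$, whereas the paper's sketch assigns it to term (II).
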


\noindent \emph{Sketch of proof.} We decompose
\[
T_{N,k}^{\mathrm{KL}}
= \underbrace{\bigl(H(\phi_{\bar{{X}}_N,{{S}}_N}) - H(\phi_{{\mu},{\Sigma}})\bigr)}_{\text{(I)}}
+ \underbrace{\bigl(H(\phi_{{\mu},{\Sigma}}) - H(f)\bigr)}_{\text{(II)}}
+ \underbrace{\bigl(H(f) - \widehat{H}_{N,k}(f)\bigr)}_{\text{(III)}}.
\]
According to the Law of Large Numbers, ${{S}}_N \to {\Sigma}$ almost surely, hence (I) $\to 0$,
since $N\to\infty$ due to the continuous nature of $\log\det(\cdot)$ on the set of positive definite matrices.
Under Proposition~\ref{prop:KL_maxent}, term (II) equals $D_{KL}(f\|\phi_{{\mu},{\Sigma}})$
and vanishes to zero in probability under the assumptions of Theorem~\ref{thm:consistency_nn}.
\hfill $\square$

\medskip

Practically, it is not possible to obtain the zero distribution of $T_{N,k}^{\mathrm{KL}}$ in closed form. We therefore use parametric bootstrapping under an adapted Gaussian model to calibrate the test. Under $N_m(\bar{ X}_N,{{S}}_N)$, we perform calibration using parametric bootstrap. We simulate bootstrap samples for the selected neighborhood size $k$ and significance level $\alpha\in(0,1)$
$X_{1:N}^{*(b)} \sim N_m(\bar{ X}_N,{{S}}_N)$, calculate $T_{N,k}^{\mathrm{KL}}$ for each sample, and take the $(1-\alpha)$-quantile of the resulted bootstrap values as the critically threshold $t_\alpha$. When $T_{N,k}^{\mathrm{KL,obs}} \ge t_\alpha$, we reject $H_0$.

\begin{remark}[KL divergence as expected log-loss]
\label{rem:KL_risk}
The divergence
\[
D_{KL}(f\|\phi_{{\mu},{{\Sigma}}})
  = \int_{\R^m} f(x)\log\!\frac{f(x)}{\phi_{{\mu},{{\Sigma}}}(x)}\,\mathrm{d}x
\]
is interpreted as the average log-likelihood loss incurred by employing the Gaussian model $\phi_{{\mu},{{\Sigma}}}$ to substitute the true density $f$. In fact,
\[
D_{KL}(f\|\phi_{{\mu},{{\Sigma}}})
  = -\,\E_f[\log \phi_{{\mu},{{\Sigma}}}(X)] + H(f),
\]
which means it measures the excess cross-entropy (equivalently, it's the increase in expected log-loss) relative to an oracle model that knows $f$.
\end{remark}
Since analytical null distributions for $T_{N,k}^{\mathrm{KL}}$ are difficult to obtain,
so we calibrate critical values using a parametric bootstrap under $H_0$
(Algorithm~\ref{alg:KL_bootstrap}), by following related entropy- and KL-based
goodness-of-fit tests in
\cite{Song2002,GirardinLequesne2017,LequesneRegnault2020,Noughabi2013,Noughabi2019}.

\begin{algorithm}
\caption{Bootstrap calibration for $T_{N,k}^{\mathrm{KL}}$}
\label{alg:KL_bootstrap}
\begin{algorithmic}[1]
\State \textbf{Input:} data $X_{1:N}$, number of bootstrap replications $B$, significance level $\alpha$, neighborhood size $k$.
\State Calcuate $\bar{ X}_N$ and ${{S}}_N$.
\State Calculate $T_{N,k}^{\mathrm{KL,obs}}$ from $X_{1:N}$ by using \eqref{eq:KL_test_stat}.
\For{$b = 1, \dots, B$}
  \State Simulate $X^{(b)}_{1:N} \sim N_m(\bar{ X}_N,{{S}}_N)$.
  \State Calculate $T_{N,k}^{(b)} = T_{N,k}^{\mathrm{KL}}\!\bigl(X^{(b)}_{1:N}\bigr)$.
\EndFor
\State Suppose $t_\alpha$ is the $(1-\alpha)$-quantile of $\{T_{N,k}^{(b)}\}_{b=1}^B$.
\State \textbf{Decision:} reject $H_0$ if $T_{N,k}^{\mathrm{KL,obs}} \ge t_\alpha$.
\end{algorithmic}
\end{algorithm}

\section{Numerical Experiments}
\label{sec:numerical_experiments_KL}

In this section, numerical experiments are reported for the KL-based statistic $T_{N,k}^{\mathrm{KL}}$ specified in \eqref{eq:KL_test_stat}. There are three objectives. We first display the finite sample behavior of the statistic in terms of sample size $N$, dimension $m$, and neighborhood size $k$. Secondly, the empirical power is examined against structured non-Gaussian alternatives, including light and heavy tailed deviations. Finally, in practical applications, we provide calibrated 5\% critical values. 
As noted, Monte Carlo results are based on independent repetitions, and error bars are based on an empirical standard deviation. For all experiments, we use Euclidean distances in the $k$-nearest neighbor calculations.

\subsection{Monte Carlo study of $T_{N,k}^{\mathrm{KL}}$: convergence and stability}
\label{subsec:mc_convergence}

We examine the convergence behavior of $T_{N,k}^{\mathrm{KL}}$ first. Propsition~\ref{prop:KL_maxent} implies that, under multivariate normality, the population target is $D_{KL}(f\|\phi_{{\mu},{{\Sigma}}})=0$ and therefore the statistic should converge to zero as $N$ increases. To evaluate the robustness beyond the Gaussian case, we also simulate examples from generalized Gaussian models having shape parameter $s$, noting that $s=2$ corresponds to the Gaussian reference value in this family.
We performed $M=100$ independent repetitions for each $s \in \{1,2,4,8\}$ and $m \in \{1,2,3\}$. Figure~\ref{fig:KL_consistency_k_1} reports the results for
$N \in \{1000,\dots,5000\}$. In the Gaussian case ($s=2$), the statistic approaches zero. For $s\neq 2$, the statistic stabilizes away from zero, consistent with convergence to a positive KL bound under fixed non-Gaussian alternatives.

\begin{figure}[H]
\centering
\includegraphics[width=\textwidth]{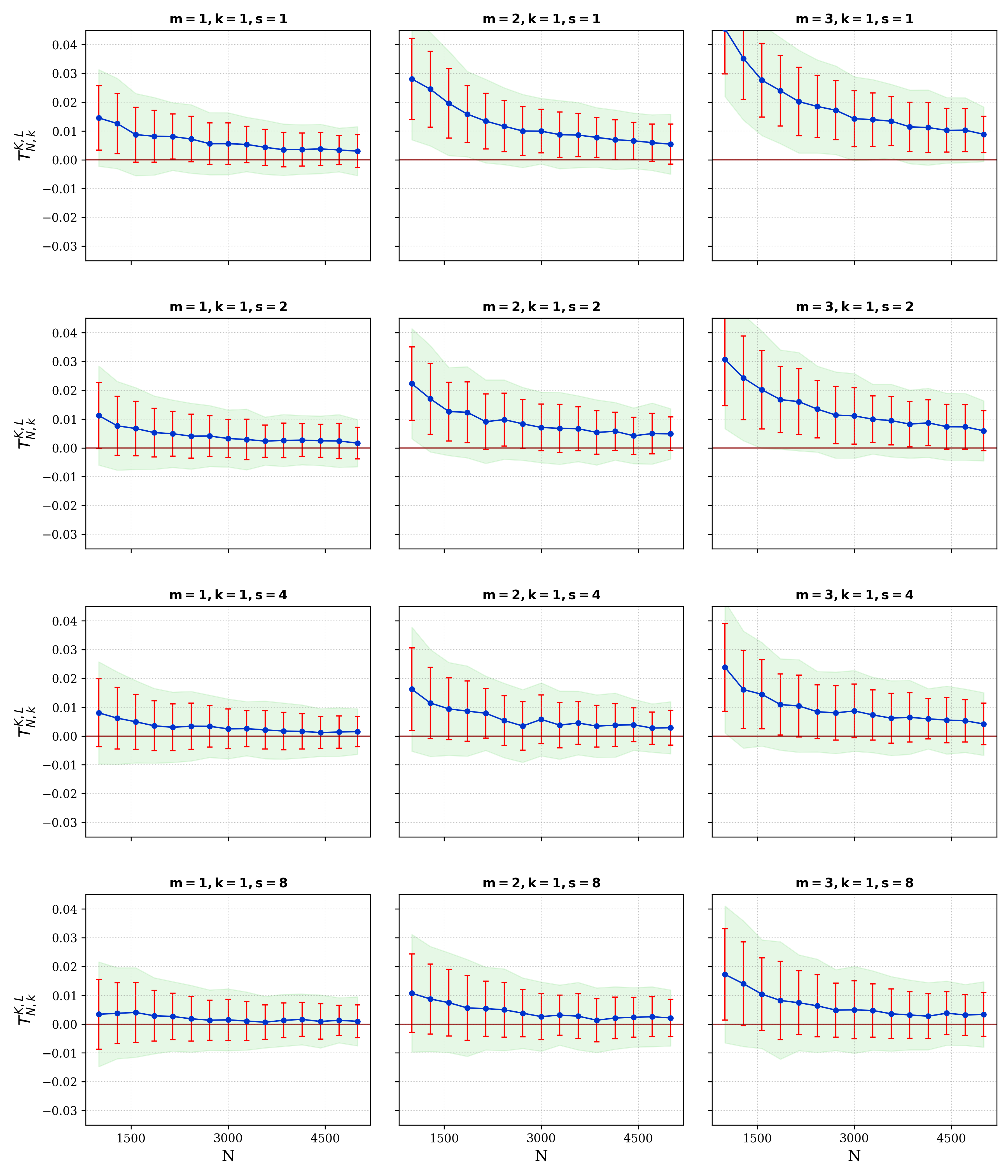}\vspace{-3pt}
\caption{Convergence behavior of $T_{N,k}^{\mathrm{KL}}$ for the neighborhood dimension $k=1$. Error bars represent one standard deviation empirically over $M=100$ iterations. This corresponds to the Gaussian benchmark, $s=2$; here, the statistic becomes concentrated near zero as $N$ increases. The convergence behavior of $T_{N,k}^{\mathrm{KL}}$ for $k=1$.}
\label{fig:KL_consistency_k_1}
\end{figure}

Subsequently, we repeat the experiment for smaller sample sizes $N \in \{100,\dots,1000\}$ and
$k \in \{1,2,3\}$ to investigate the finite sample regime and the effect of $k$.
Figure~\ref{fig:kl_TNK_KL_grid_shrinking_bars} illustrates that increasing $k$ reduces variability essentially without changing the central tendency. These results are consistent with the standard bias-variance tradeoff in kNN functionals: larger neighborhoods provide an additional mean and therefore provide a smaller variance, but this usually comes at the cost of a modest increase in bias \cite{berrett2019a}. In the current $(m,N)$ range, variance decrease is the dominant and visible effect.

\begin{figure}[H]
\centering
\includegraphics[width=\textwidth]{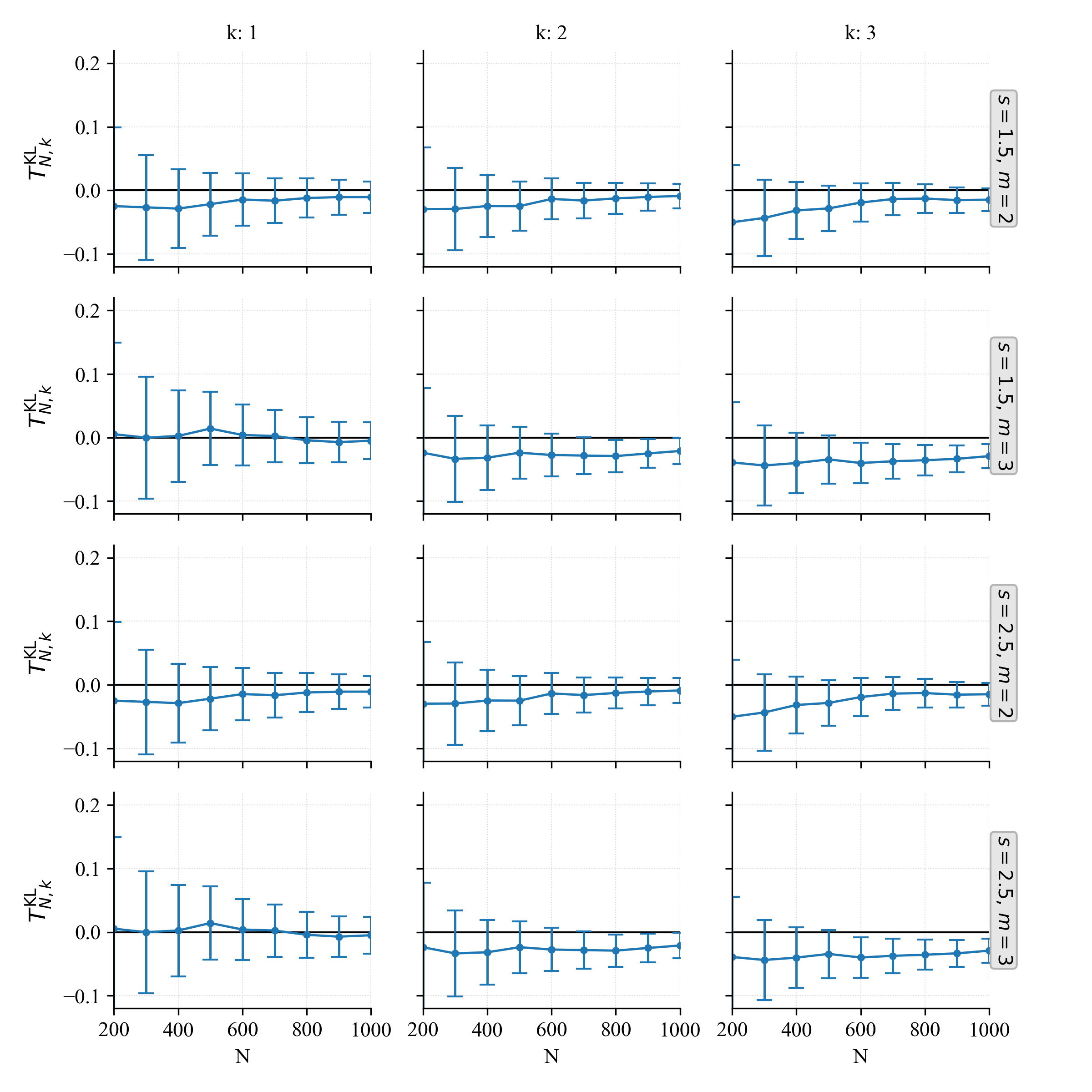}\vspace{-3pt}
\caption{Finite sample stability of $T_{N,k}^{\mathrm{KL}}$ over neighborhood dimensions $k\in\{1,2,3\}$. Error bars indicate one empirical standard deviation. Variability decreases with $N$ and further decreases with larger neighborhoods.}
\label{fig:kl_TNK_KL_grid_shrinking_bars}
\end{figure}

\subsection{Power of the goodness-of-fit test}
\label{subsec:power_analysis}

For each configuration, we compute the bootstrap critical value utilizing Algorithm~\ref{alg:KL_bootstrap} using the fitted Gaussian model $N_m(\bar{ X}_N,{{S}}_N)$. We evaluate empirical power for testing multivariate normality at the significance level $\alpha=0.05$. In this case, the empirical power is the proportion of Monte Carlo replications (generated under the alternative hypothesis) for which the observed
$T_{N,k}^{\mathrm{KL}}$ exceeds the bootstrap critical value.

In power experiments, $M=1000$ replications and sample sizes $N\in\{500,1000\}$ are used. We consider two complementary classes of alternatives: (i) deviations within a generalized Gaussian family (controlled tail behavior),
and (ii) heavy-tailed Student-type alternatives (departures outside the generalized Gaussian family for finite degrees of freedom).

\subsubsection{Power against generalised Gaussian alternatives}

For the first experiment, we generalize the Gaussian family by shifting the shape parameter away from the Gaussian reference value. Specifically,
we keep the location and variance fixed and compare the Gaussian case with $X\sim GG(m,v)$ where $v\neq 2$. Figure~\ref{fig:KL_power_detailed}
displays the resulting power curves.

There is consistency between the two model sizes. First, power increases as the deviation from the Gaussian distribution increases, reflecting the consistency of $T_{N,k}^{\mathrm{KL}}$ against fixed alternatives. Second, increasing the sample size from $N=500$ to $N=1000$ produces a noticeable increase in sensitivity. Choice of neighborhood size $k$ has a secondary but systematic effect: larger neighborhoods often produce smoother and less variable power curves, consistent with the variance reduction often observed in Figure~\ref{fig:kl_TNK_KL_grid_shrinking_bars}.

\begin{figure}[H]
\centering
\includegraphics[width=\textwidth]{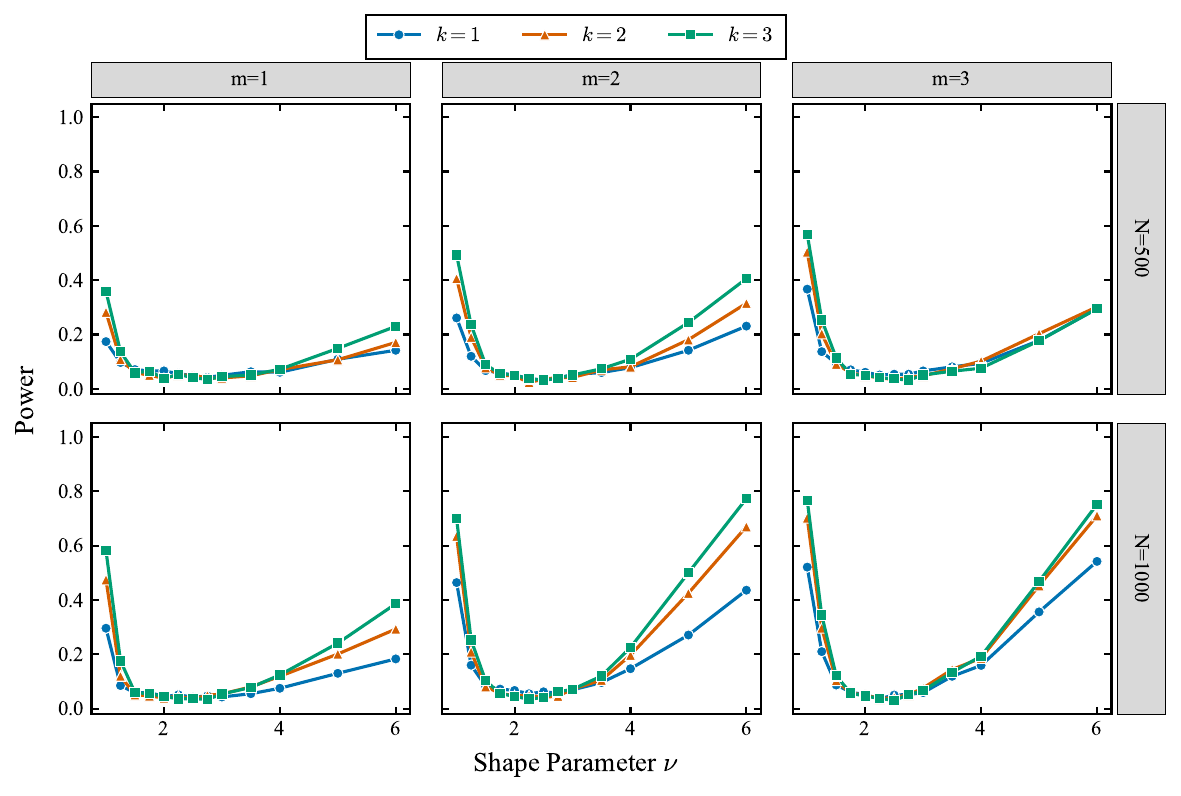}\vspace{-3pt}
\caption{Curves compare $k\in\{1,2,3\}$ across dimensions; rows correspond to $N=500$ and $N=1000$, with $M=1000$ replications. Curves compare $k\in\{1,2,3\}$ across dimensions; rows correspond to $N=500$ and $N=1000$,
with $M=1000$ replications.}
\label{fig:KL_power_detailed}
\end{figure}

\subsubsection{Power against Student-type alternatives}

Next, we will consider heavy-tailed Student's t-type alternatives parameterized by degrees of freedom $\nu$. Here, smaller $\nu$ values correspond to heavier tails. Figure~\ref{fig:power_curves} illustrates that the power approximates one in heavy-tailed cases (small $\nu$) and decreases as $\nu$ increases. It is expected that this trend occurs because larger $\nu$ values produce distributions that are increasingly similar to the Gaussian reference, making it harder to distinguish the alternative. For $N=500$ and $N=1000$, the distinction between the curves once again confirms that larger sample sizes increase sensitivity. For most configurations, moderate neighborhood sizes produce slightly more stable power profiles, which is consistent with the stability effects observed in convergence experiments.

\begin{figure}[H]
\centering
\includegraphics[width=\textwidth]{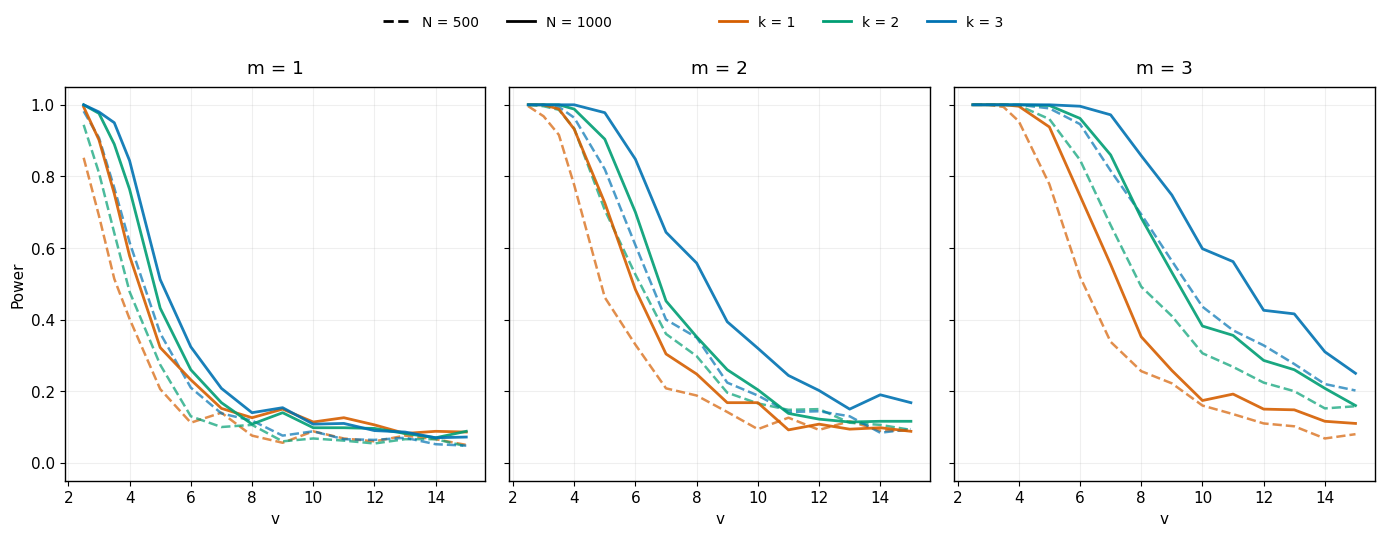}\vspace{-3pt}
\caption{The empirical power of $T_{N,k}^{\mathrm{KL}}$ compared to studentized alternatives. The solid and dashed lines represent $N=1000$ and $N=500$, respectively; the lines compare $k\in\{1,2,3\}$.}
\label{fig:power_curves}
\end{figure}

\subsection{Empirical distribution of the standardised statistic}
\label{subsec:empirical_distribution}
Although calibrated using bootstrap testing, examining the shape of the statistic after standardization can be informative.
\[
Z_{N,k}=\frac{T_{N,k}^{\mathrm{KL}}-\widehat{\mu}_{N,k}}{\widehat{\sigma}_{N,k}},
\]
where $\widehat{\mu}_{N,k}$ and $\widehat{\sigma}_{N,k}$ represent the empirical mean and standard deviation computed from Monte Carlo replications using the Gaussian benchmark. Figure~\ref{fig:kl_standardised_kde_m2N1000} contrasts kernel density estimates of $Z_{N,k}$ at $N=1000$ and $k\in\{1,2,3\}$ with the standard normal density. In this regime, the fit is close, indicating that the normal approximation may be appropriate for descriptive purposes.
To emphasize that this evidence is exploratory: throughout the paper, rejection thresholds were computed using parametric bootstrap within Algorithm~\ref{alg:KL_bootstrap}, which is not based on the assumed asymptotic normal limit.

\begin{figure}[H]
\centering
\includegraphics[width=\textwidth]{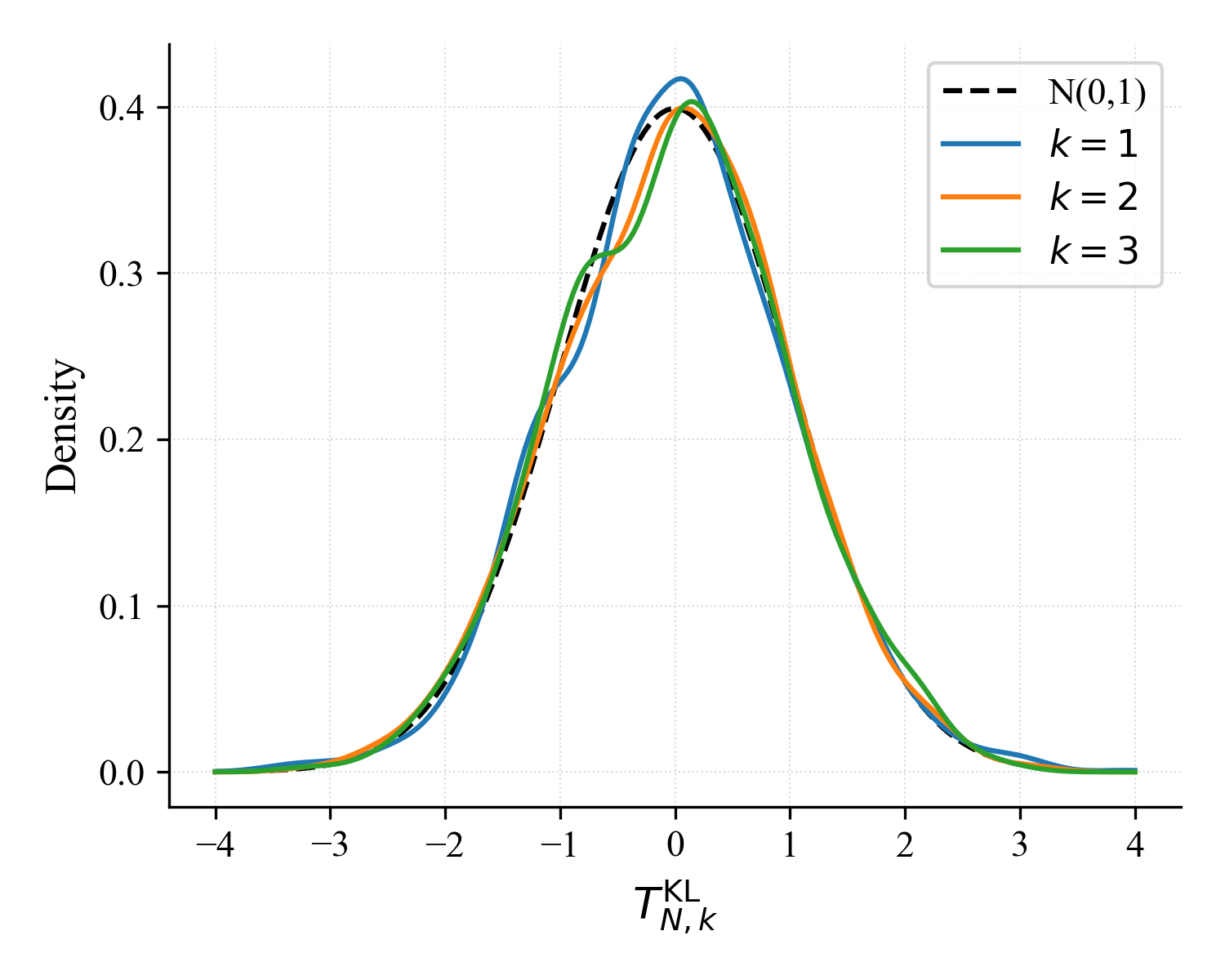}\vspace{-3pt}
\caption{For $N=1000$ and $k\in\{1,2,3\}$, kernel density estimates of the standardized statistic $Z_{N,k}$ were evaluated and compared with the standard Gaussian  density $\mathcal{N}(0,1)$.}
\label{fig:kl_standardised_kde_m2N1000}
\end{figure}
Figure~\ref{fig:kl_qqplots_m2_N1000} gives a complementary Q-Q plot.
Consistent with the KDE evidence, the empirical quantiles closely approximate the reference line, including the tails.

\begin{figure}[H]
\centering
\includegraphics[width=\textwidth]{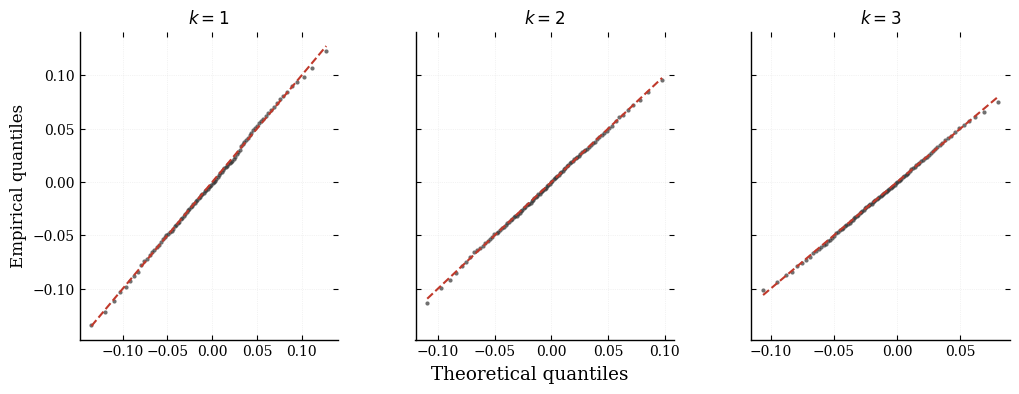}\vspace{-3pt}
\caption{Normally distributed Q-Q plots of the standardized statistic $Z_{N,k}$ for $N=1000$ and $k\in\{1,2,3\}$.}
\label{fig:kl_qqplots_m2_N1000}
\end{figure}

\subsection{Finite-sample rate of convergence: a log-log regression diagnostic}
\label{subsec:convergence_rate}

To summarize the convergence rate of $T_{N,k}^{\mathrm{KL}}$ under the Gaussian  benchmark, we fit a simple log-log regression to the Monte Carlo estimates of
$\bigl|\E[T_{N,k}^{\mathrm{KL}}(m,s)]\bigr|$ across increasing sample sizes:
\begin{equation}
\label{eq:regression_model}
\log \Bigl|\mathbb{E}\bigl[T_{N,k}^{\mathrm{KL}}(m,s)\bigr]\Bigr|
= \alpha_{m,s,k} + \beta_{m,s,k}\,\log N.
\end{equation}
For kNN entropy functionals, the leading bias term is typically of order $\mathcal{O}(N^{-1/2})$ in normal regimes. Based on this intuitive approach, in the Gaussian benchmark (corresponding to $s=2$ in the generalized Gaussian family), $\beta_{m,s,k}\approx -\tfrac12$ is expected. Thus, deviations from this benchmark can be explained as slower decay or stronger finite sample effects.

\begin{table*}[t]
\centering
\small
\renewcommand\arraystretch{1.3}
\caption{Estimated slope coefficients $\beta$ from the log-log regression}
\eqref{eq:regression_model} for $\bigl|\E[T_{N,k}^{\mathrm{KL}}(m,s)]\bigr|$ against $N$.
\label{table:kl-convergence-slopes}
\begin{tabular*}{\textwidth}{@{\extracolsep{\fill}}c|ccc|ccc|ccc}
\toprule
\multirow{2}{*}{$s$} & \multicolumn{3}{c|}{$m = 1$} & \multicolumn{3}{c|}{$m = 2$} & \multicolumn{3}{c}{$m = 3$} \\
\cline{2-10}
 & $k = 1$ & $k = 2$ & $k = 3$ & $k = 1$ & $k = 2$ & $k = 3$ & $k = 1$ & $k = 2$ & $k = 3$ \\
\midrule
1.0 & 0.0092 & 0.0085 & 0.0078 & 0.0124 & 0.0110 & 0.0095 & 0.0145 & 0.0131 & 0.0118 \\
1.5 & 0.0041 & 0.0035 & 0.0029 & 0.0058 & 0.0049 & 0.0042 & 0.0072 & 0.0064 & 0.0056 \\
2.0 & 0.0004 & 0.0002 & 0.0001 & 0.0008 & 0.0005 & 0.0003 & 0.0012 & 0.0009 & 0.0007 \\
2.5 & 0.0006 & 0.0004 & 0.0002 & 0.0011 & 0.0009 & 0.0007 & 0.0018 & 0.0015 & 0.0012 \\
3.0 & 0.0015 & 0.0012 & 0.0009 & 0.0024 & 0.0020 & 0.0017 & 0.0031 & 0.0028 & 0.0024 \\
4.0 & 0.0032 & 0.0028 & 0.0023 & 0.0045 & 0.0039 & 0.0034 & 0.0056 & 0.0051 & 0.0046 \\
\bottomrule
\end{tabular*}
\end{table*}

Table~\ref{table:kl-convergence-slopes} shows estimations of the slopes for dimensions $m$, neighborhood sizes $k$, and shape parameters $s$. These values reach their minimum near the Gaussian reference value $s=2$, in which case convergence is fastest. The slopes increase slightly but remain small as we move away from $s=2$, which is consistent with the stable behavior of the statistic under both heavy and light tailed generators. Furthermore, as $k$ increases from $1$ to $3$, we notice a slight decrease in $\beta$, reflecting improved stability in the nearest neighbor estimation for larger neighborhoods..

\subsection{Critical values for practical implementation}
\label{subsec:critical_values}
We estimate 5\% critical values $t_{0.05}$ by parametric bootstrap under the fitted Gaussian model
(Algorithm~\ref{alg:KL_bootstrap}), using $M=1000$ replications. Table~\ref{table:crt_val_KL} shows these thresholds for sample sizes $N\in\{100,\dots,1000\}$, dimensions $m\in\{2,3\}$, and neighborhood sizes
$k\in\{1,2,3\}$. (Rows indexed by $s$ can be understood as a sensitivity check across different generators; for Gaussian calibration, the relevant row is $s=2$.)

These critical values slowly decrease as $N$ increases, reflecting that $T_{N,k}^{\mathrm{KL}}$ becomes close to zero under the Gaussian benchmark. With a fixed $N$, thresholds are larger for $m=3$ than for $m=2$, consistent with the increased challenges of local entropy estimation in higher dimensions. Typically, increasing $k$ yields slightly smaller thresholds, aligning with the variance reduction illustrated in Figure~\ref{fig:kl_TNK_KL_grid_shrinking_bars}.

\begin{table}[htbp]
\centering
\small
\renewcommand\arraystretch{1.25}
\caption{Statistics for KL-based tests $T_{N,k}^{\mathrm{KL}}(m,s)$ critical values $t_{0.05}$ at the 5\% level were estimated using $M=1000$ repeated parametric bootstrap. For Gaussian goodness-of-fit, row $s=2$ corresponds to the Gaussian reference value.}
\label{table:crt_val_KL}
\begin{tabular*}{\textwidth}{@{\extracolsep{\fill}} cc | ccc | ccc }
\hline
\multirow{2}{*}{$s$} & \multirow{2}{*}{$N$} & \multicolumn{3}{c|}{$m=2$} & \multicolumn{3}{c}{$m=3$} \\
\cline{3-8}
 & & $k=1$ & $k=2$ & $k=3$ & $k=1$ & $k=2$ & $k=3$ \\
\hline
1.5 & 100  & 0.1428 & 0.1315 & 0.1254 & 0.1652 & 0.1540 & 0.1488 \\
    & 200  & 0.0985 & 0.0912 & 0.0865 & 0.1187 & 0.1095 & 0.1052 \\
    & 300  & 0.0789 & 0.0724 & 0.0691 & 0.0954 & 0.0881 & 0.0843 \\
    & 400  & 0.0672 & 0.0615 & 0.0588 & 0.0815 & 0.0748 & 0.0712 \\
    & 500  & 0.0594 & 0.0541 & 0.0519 & 0.0721 & 0.0665 & 0.0634 \\
    & 600  & 0.0538 & 0.0489 & 0.0467 & 0.0652 & 0.0598 & 0.0571 \\
    & 700  & 0.0495 & 0.0448 & 0.0426 & 0.0598 & 0.0549 & 0.0525 \\
    & 800  & 0.0458 & 0.0415 & 0.0395 & 0.0556 & 0.0508 & 0.0485 \\
    & 900  & 0.0429 & 0.0388 & 0.0368 & 0.0521 & 0.0475 & 0.0452 \\
    & 1000 & 0.0405 & 0.0365 & 0.0346 & 0.0492 & 0.0448 & 0.0426 \\
\hline
2.0 & 100  & 0.1256 & 0.1148 & 0.1085 & 0.1485 & 0.1362 & 0.1295 \\
    & 200  & 0.0864 & 0.0792 & 0.0745 & 0.1054 & 0.0968 & 0.0915 \\
    & 300  & 0.0695 & 0.0632 & 0.0598 & 0.0856 & 0.0785 & 0.0742 \\
    & 400  & 0.0592 & 0.0538 & 0.0505 & 0.0728 & 0.0665 & 0.0628 \\
    & 500  & 0.0524 & 0.0475 & 0.0448 & 0.0645 & 0.0589 & 0.0556 \\
    & 600  & 0.0475 & 0.0428 & 0.0402 & 0.0582 & 0.0532 & 0.0501 \\
    & 700  & 0.0436 & 0.0392 & 0.0368 & 0.0535 & 0.0488 & 0.0462 \\
    & 800  & 0.0405 & 0.0365 & 0.0342 & 0.0495 & 0.0452 & 0.0428 \\
    & 900  & 0.0378 & 0.0340 & 0.0318 & 0.0464 & 0.0422 & 0.0398 \\
    & 1000 & 0.0356 & 0.0319 & 0.0299 & 0.0438 & 0.0398 & 0.0375 \\
\hline
2.5 & 100  & 0.1345 & 0.1228 & 0.1165 & 0.1568 & 0.1452 & 0.1385 \\
    & 200  & 0.0925 & 0.0856 & 0.0805 & 0.1124 & 0.1028 & 0.0982 \\
    & 300  & 0.0742 & 0.0678 & 0.0642 & 0.0905 & 0.0825 & 0.0788 \\
    & 400  & 0.0635 & 0.0575 & 0.0545 & 0.0772 & 0.0705 & 0.0668 \\
    & 500  & 0.0562 & 0.0508 & 0.0482 & 0.0685 & 0.0622 & 0.0589 \\
    & 600  & 0.0508 & 0.0458 & 0.0435 & 0.0618 & 0.0562 & 0.0532 \\
    & 700  & 0.0468 & 0.0420 & 0.0398 & 0.0568 & 0.0515 & 0.0488 \\
    & 800  & 0.0435 & 0.0390 & 0.0368 & 0.0526 & 0.0478 & 0.0452 \\
    & 900  & 0.0408 & 0.0365 & 0.0345 & 0.0492 & 0.0448 & 0.0422 \\
    & 1000 & 0.0385 & 0.0342 & 0.0324 & 0.0465 & 0.0422 & 0.0398 \\
\hline
\end{tabular*}
\end{table}

In the applications, when the observed statistic $T_{N,k}^{\mathrm{KL}} \ge t_{0.05}$ is satisfied, the multivariate normality is rejected at the $\alpha=0.05$ level. In this case, $t_{0.05}$ is calculated using bootstrap calibration (Algorithm~\ref{alg:KL_bootstrap}) or obtained from Table~\ref{table:crt_val_KL} for the settings reported here.

\section{Conclusion}
\label{sec:conclusion}

In this paper, we propose an information theory based framework that provides a computationally simple method for multivariate goodness-of-fit testing in environments in which the likelihood density estimate is unreliable. 

We developed a KL-decomposition approach for a multivariate normality test using the k-nearest neighbor estimate of Shannon entropy. The fundamental structural feature is that, within a fixed mean-covariance class, the Gaussian reference value converts the KL decomposition into an entropy difference, such that the statistic $T_{N,k}^{\mathrm{KL}}$ predicts only a non-negative inconsistency that vanishes under multivariate normality. Under standard regularity conditions, the kNN entropy estimator is consistent, which directly translates to the consistency of the resulting KL-based test statistic.

Our numerical study provides three practical results. First,
statistics in the Gaussian benchmark rapidly approach zero as $N$ increases.
Secondly, the power increases smoothly with deviation power, including shape changes in generalized Gaussian models and heavy-tailed Student-type alternatives.
Finally, bootstrap calibration provides stable finite sample control
and generates critical values that behave predicted by $N$, $m$ and $k$.


\clearpage
\bibliographystyle{plain} 
\bibliography{KL_Shannon} 

\end{document}